\documentclass{amsart}

\usepackage[all]{xy}

\theoremstyle{plain}
\newtheorem{thm}{Theorem}
\newtheorem{prop}[thm]{Proposition}

\theoremstyle{definition}

\theoremstyle{remark}

\newcommand{\HH}{\mathrm{H}}

\newcommand{\semid}{\unitlength.47cm
 \begin{picture}(.7,.6)
   \put(0,.05){$\times$}
   \put(.47,.04){\line(0,1){.39}}
 \end{picture}}
 
 \newcommand{\unter}[2]{\genfrac{}{}{0pt}{}{#1}{#2}}

\begin{document}

\title[Large universal deformation rings]{Large universal deformation rings}

\author{Frauke M. Bleher}
\address{Department of Mathematics\\University of Iowa\\
Iowa City, IA 52242-1419}
\email{frauke-bleher@uiowa.edu}
\thanks{The author was supported in part by  NSA Grant H98230-11-1-0131.}
\subjclass[2000]{Primary 20C20; Secondary 20C05, 16G10, 16G20}
\keywords{Universal deformation rings; defect groups;
generalized quaternion Sylow 2-subgroups; tame blocks; cyclic blocks}

\begin{abstract}
We provide a series of examples of finite groups $G$ and mod $p$ representations $V$ of
$G$ whose stable endomorphisms are all given by scalars
such that the universal deformation ring $R(G,V)$ of $V$ is large in the sense
that $R(G,V)/pR(G,V)$ is isomorphic to a power series algebra in one variable.
\end{abstract}

\maketitle

Let $k$ be a perfect field of positive characteristic $p$, let $\Gamma$ be a profinite group and 
let $V$ be a continuous finite dimensional representation of $\Gamma$ over $k$.
Suppose that the pro-$p$ completion of every open subgroup of $\Gamma$ is topologically
finitely generated (which is trivially satisfied when $\Gamma$ is a finite group).
It is an important representation theoretic
problem to determine when $V$ can be lifted to a representation over a local 
commutative ring of characteristic 0. If $W=W(k)$ is the ring of infinite Witt vectors over $k$, then
Green's lifting theorem shows, for example, that $V$ can be lifted to $W$ if 
$\HH^2(\Gamma,\mathrm{End}_k(V))=0$. The most natural generalization of such results is to
determine the full versal deformation ring $R(\Gamma,V)$ of $V$. 
The topological ring $R(\Gamma,V)$ is characterized by the property that 
the isomorphism class of every lift of $V$ over
a complete local commutative Noetherian ring $R$ with residue field $k$ arises
from a local ring homomorphism $\alpha: R(\Gamma,V)\to R$ and that $\alpha$ is unique
if $R$ is the ring of dual numbers $k[t]/(t^2)$. In case $\alpha$ is unique
for all $R$, $R(\Gamma,V)$ is called the universal deformation ring of $V$.
Note that all these rings $R$, including $R(\Gamma,V)$, have a natural $W$-algebra structure.
For more details on 
deformation rings and deformations, see for example \cite{mazur}. For the purpose of this paper,
\cite[Sect. 2]{3quat} provides the necessary background on deformation rings and 
deformations of representations of finite groups. 
It was shown in \cite[Prop. 2.1]{bc} that if $\Gamma$ is a finite group and
the stable endomorphism ring 
$\underline{\mathrm{End}}_{k\Gamma}(V)$ is isomorphic to $k$, then
the versal deformation ring $R(\Gamma,V)$ is always universal.

In this paper, we consider the question of how large $R(\Gamma,V)$ can be 
when $\Gamma$ is a finite group and $\underline{\mathrm{End}}_{k\Gamma}(V)\cong k$.
If $\Gamma$ is a profinite Galois group,  it is of interest to study the
case when $R(\Gamma,V)/pR(\Gamma,V)$ is finite dimensional over $k$, since this case may
lead to explicit presentations of $R(\Gamma,V)$
(see for example \cite{bockle} and its references). Since the representation $V$ 
factors through a finite quotient $G$ of $\Gamma$, it follows that if $R(G,V)/pR(G,V)$
is not finite dimensional over $k$ then $R(\Gamma,V)/pR(\Gamma,V)$ cannot be either.

For the remainder of this paper, suppose that $\Gamma=G$ is finite.
Let $V$ be a finitely generated indecomposable $kG$-module whose stable endomorphism ring is 
isomorphic to $k$ and which belongs to a block $B$ of $kG$ with a defect group $D$. 
It was shown in \cite{bc,bls} that if $D$ is cyclic, or if $p=2$, $D$ is dihedral and $B$ is Morita
equivalent to a principal block, then $R(G,V)$ is isomorphic to a subquotient ring of $WD$.
In particular,  $R(G,V)/pR(G,V)$ is  finite dimensional over
$k$ in these cases. Other instances where the finite dimensionality of $R(G,V)/pR(G,V)$
was established can be found for example in  \cite{3quat,bcdS} and their references.
However, de Smit and Rainone found examples in the case when $p\ge 5$
of finite groups $G$ and $kG$-modules $V$ such that $R(G,V)/pR(G,V)$ is isomorphic to $k[[t]]$,
and hence not finite dimensional over $k$ (see \cite[Remark 4.3]{bcdS} and \cite{bart}).

In this paper, we provide examples in Theorem \ref{thm:counter} (resp. Proposition
\ref{prop:oddp}) of finite groups $G$ and $kG$-modules $V$ such that 
$R(G,V)/pR(G,V)\cong k[[t]]$ when $p=2$ (resp. $p=3$).
In fact, the example in Proposition \ref{prop:oddp} works for all $p\ge 3$.
In particular, these examples provide a negative answer to \cite[Question 1.1]{bc} for 
all primes $p$.
The methods we use to construct our examples are different from the methods used
by de Smit and Rainone. Their computations do not involve block theory, whereas our 
methods to prove Theorem \ref{thm:counter} heavily rely on the 
description of tame blocks by Erdmann in \cite{erd} and the corresponding representation theory 
using quivers and relations. For the proof of Proposition \ref{prop:oddp}, we moreover use
the representation theory of cyclic blocks given by Brauer trees.

\begin{thm}
\label{thm:counter}
Suppose $k$ is an algebraically closed field of characteristic $p=2$.
Let $\overline{G}$ be a simple group with dihedral Sylow $2$-subgroups of order $2^d\ge 4$, 
and let $G$ be a non-trivial double cover of $\overline{G}$. 
Then there exists an indecomposable
$kG$-module $V$ of composition series length $4$ or $5$ which is a quotient module of  
a projective indecomposable $kG$-module such that $\underline{\mathrm{End}}_{kG}(V)\cong k$ 
and such that $R(G,V)\cong W[[t]]/(2f_V(t))$ for a certain power series $f_V(t)\in W[[t]]$. 
\end{thm}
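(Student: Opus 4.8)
The plan is to locate $V$ inside an explicitly understood tame block of $kG$, reduce via a Morita equivalence to one of Erdmann's normal‑form algebras, and then compute $R(G,V)$ by exhibiting a concrete deformation over $W[[t]]/(2f_V(t))$ and matching it against an obstruction calculation. Since $\overline{G}$ is simple with dihedral Sylow $2$-subgroups and $G$ is a non‑trivial double cover, $G$ is quasisimple with centre of order $2$ and has generalized quaternion Sylow $2$-subgroups of order $2^{d+1}$; by the Gorenstein--Walter classification $G\cong\mathrm{SL}_2(q)$ for some odd $q\ge 5$ or $G\cong 2.A_7$. I would fix a $2$-block $B$ of $kG$ of full defect -- for instance the principal block -- so that $B$ has a generalized quaternion defect group and is of tame representation type, and invoke Erdmann's classification \cite{erd}: the basic algebra $\Lambda$ of $B$ is one of a short explicit list of symmetric algebras given by quivers with relations, with one, two or three simple modules and Loewy length growing with $d$. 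Since a Morita equivalence between block algebras identifies versal deformation rings of corresponding modules (cf.\ \cite{bc,3quat}), it suffices to carry out the computation over $\Lambda$.

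Next I would write down $V$ explicitly as a suitable quotient of a projective indecomposable $\Lambda$-module $P(S)$, with radical series of length $4$ or $5$; which of these two lengths occurs, and the precise shape of the radical series, depends on which of Erdmann's algebras arises and on the size of $d$. By construction $V$ is indecomposable, a quotient of a PIM, and of composition length $4$ or $5$. The isomorphism $\underline{\mathrm{End}}_\Lambda(V)\cong k$ is then a finite check with the quiver and relations: modulo scalars, every endomorphism of $V$ either is an automorphism or has image of strictly smaller length, and the relations of $\Lambda$ force any non‑invertible endomorphism to factor through the projective cover of $V$, hence to be zero in $\underline{\mathrm{End}}_\Lambda(V)$. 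By \cite[Proposition 2.1]{bc} the versal deformation ring $R(G,V)$ is then universal.

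Having $\underline{\mathrm{End}}_\Lambda(V)\cong k$, I would compute the tangent space $\mathrm{Ext}^1_\Lambda(V,V)$ and show it is one‑dimensional over $k$, so that $R(G,V)$ is a quotient $W[[t]]/J$ of a power series algebra in one variable over $W$. To determine $J$, I would first produce an explicit deformation of $V$ over $\Lambda\otimes_W W[[t]]/(2f_V(t))$ by lifting the matrices describing the action of the arrows of the quiver on $V$ to matrices with entries in $W[[t]]$ and imposing the relations of $\Lambda$, lifted to $W$; this simultaneously forces the power series $f_V(t)\in W[[t]]$ and shows $J\subseteq(2f_V(t))$. For the reverse inclusion I would run the standard obstruction calculus along a filtration of $W[[t]]/(2f_V(t))$: the relevant obstructions lie in $\mathrm{Ext}^2_\Lambda(V,V)$, which one checks is at most one‑dimensional, so $J$ is principal, and a direct computation with the lifted relations identifies its generator, up to a unit, with $2f_V(t)$. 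Hence $R(G,V)\cong W[[t]]/(2f_V(t))$, and in particular $R(G,V)/2R(G,V)\cong k[[t]]$.

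The main obstacle is the last step: determining $J$ exactly, equivalently proving that the explicit lift constructed over $W[[t]]/(2f_V(t))$ is versal and not merely one of its lifts. This forces one to work with the relations of the tame block lifted to characteristic $0$ -- where Erdmann's precise presentations, together with the corresponding $W$-orders, are indispensable -- and to push the obstruction computation in $\mathrm{Ext}^2_\Lambda(V,V)$ through the case distinctions in $d$ and in the number of simple modules of $B$.
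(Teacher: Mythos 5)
Your setup coincides with the paper's through the computation of the Ext groups: Gorenstein--Walter to reduce to $G\cong\mathrm{SL}_2(\mathbb{F}_q)$ or $\tilde A_7$, the principal block (which here has three simple modules), Erdmann's quiver presentations of its basic algebra $\Lambda$, Morita invariance of deformation rings, an explicit $V$ of length $4$ or $5$ realized as a quotient of a projective indecomposable, and the checks $\underline{\mathrm{End}}_\Lambda(V)\cong k$, $\dim_k\mathrm{Ext}^1_\Lambda(V,V)=1=\dim_k\mathrm{Ext}^2_\Lambda(V,V)$, so that $R(G,V)\cong W[[t]]/(g)$ for a single power series $g$. The paper does all of this, case by case for the three families of presentations.

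The gap is in your endgame, and you have in fact flagged it yourself as ``the main obstacle'': determining the ideal $(g)$ exactly by lifting Erdmann's relations to characteristic $0$ and running the obstruction calculus over $W[[t]]/(2f_V(t))$ would require the $W$-order structure of the block, which is not available in the same explicit form as the mod-$p$ presentations, and your description of how $f_V$ is ``forced'' is circular as stated. The idea you are missing is that the theorem only asserts $R(G,V)\cong W[[t]]/(2f_V(t))$ for \emph{some} $f_V$, and once one knows $R(G,V)\cong W[[t]]/(g)$ with $(g)$ principal, this is equivalent to $g\in 2W[[t]]$, i.e.\ to $R(G,V)/2R(G,V)\cong k[[t]]$. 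So no characteristic-$0$ computation is needed: one constructs an explicit lift of $V$ over $k[[t]]$ by taking a free $k[[t]]$-module on the chosen basis of $V$ and perturbing the action of a single arrow (e.g.\ $\gamma\mapsto X_\gamma+tE_{24}$ in family (I)), verifies the relations of $I$ still hold, checks that the reduction modulo $t^2$ is a nontrivial lift so that the classifying map $R(\Lambda,V)/2\to k[[t]]$ is surjective, and concludes it is an isomorphism because $R(\Lambda,V)/2$ is itself a quotient of $k[[t]]$. This characteristic-$p$ shortcut is the essential content of the paper's proof and is what your proposal needs to replace the unresolved obstruction computation.
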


\begin{proof}
Let $p=2$, and let $\overline{G}$, $d$ and $G$ be as in the statement of Theorem 
\ref{thm:counter}.
Let $\overline{B}$ (resp. $B$) be the principal block of $k\overline{G}$ (resp. $kG$). 
From the classification by Gorenstein and Walter of the groups with dihedral Sylow $2$-subgroups in \cite{gowa},
it follows that $\overline{G}$  is isomorphic to either $\mathrm{PSL}_2(\mathbb{F}_q)$ for some
odd prime power $q$ or to the alternating group $A_7$. Therefore, $G$ is isomorphic to either 
$\mathrm{SL}_2(\mathbb{F}_q)$ for some odd prime power $q$ or to the non-trivial double cover $\tilde{A}_7$.
In particular, it follows that both $\overline{B}$ and $B$ have precisely three isomorphism classes of 
simple modules.

The quivers and relations of the basic algebras of $\overline{B}$ and $B$ were determined in \cite{erd}.
In \cite{3quat}, the universal deformation rings were found of certain $kG$-modules that are 
inflated from $k\overline{G}$-modules belonging to $\overline{B}$. 
Since the notation introduced in \cite{3quat} is convenient 
for our purposes, we will freely use it. 
For the convenience of the reader, we will reproduce below
the quiver and relations of the basic algebra of $B$.

\smallskip

\textbf{Family (I) from \cite[Sect. 3.1]{3quat}.}
In family (I), $G\cong \mathrm{SL}_2(\mathbb{F}_q)$ where $q$ is a prime power with $q\equiv 1 \mod 4$ 
such that $2^{d+1}$ is the maximal $2$-power dividing $(q^2-1)$. The principal block $B$ of $kG$ is
Morita equivalent to $\Lambda=kQ/I$ where $Q$ and $I$ are as in Figure \ref{fig:familyI}.
The projective indecomposable $\Lambda$-modules are pictured for example in \cite[Fig. 1]{3quat}. 
\begin{figure}[ht] 
\caption{\label{fig:familyI} $\Lambda=kQ/I$ for blocks $B$ in family (I).}
$$
\xymatrix @R=-.2pc {
&0&\\
Q= \quad 1\; \bullet \ar@<.8ex>[r]^(.66){\beta} \ar@<1ex>[r];[]^(.34){\gamma}
& \bullet \ar@<.8ex>[r]^(.44){\delta} \ar@<1ex>[r];[]^(.56){\eta} & \bullet\; 2}
$$
\begin{eqnarray*}
I&=&\langle \beta\gamma\beta-\eta\delta\beta
(\gamma\eta\delta\beta)^{2^{d-1}-1},\gamma\beta\gamma-\gamma\eta\delta (\beta\gamma\eta
\delta)^{2^{d-1}-1}, \\ 
&&\eta\delta\eta-\beta\gamma\eta(\delta\beta\gamma\eta)^{2^{d-1}-1},
\delta\eta\delta-\delta\beta\gamma(\eta\delta\beta\gamma)^{2^{d-1}-1},\\ 
&&\delta\beta\gamma\beta,\gamma\eta\delta\eta\rangle.
\end{eqnarray*}
\end{figure}

Let $T$ be the 
$\Lambda$-module of $k$-dimension 5 which has an ordered $k$-basis given by (the images of)
$$(b_0,b_1,b_2,b_3,b_4)=(e_1,\beta,\gamma\beta,\delta\beta,\eta\delta\beta).$$
In particular, $T$ is a quotient module of the projective indecomposable
$\Lambda$-module $P_1=\Lambda e_1$ corresponding to the vertex $1$ in $Q$. 
The $\Lambda$-module structure of $T$ is given by the following $5\times 5$ matrices 
$X_c$ which describe the
action of (the image of) each vertex (resp. arrow) $c$ in $Q$ on $\{b_0,b_1,\ldots, b_4\}$:
$X_{e_0}=E_{11}+E_{44}$, $X_{e_1}=E_{00}+E_{22}$, $X_{e_2}=E_{33}$,
$X_\beta=E_{10}$, $X_\gamma=E_{21}$, 
$X_\delta=E_{31}$ and $X_\eta=E_{43}$. Here $E_{ji}$ denotes the $5\times 5$ matrix
whose $(j,i)$ entry is equal to 1  and all entries are equal to 0, i.e. $E_{ji}$ sends $b_i$ to $b_j$
and all other basis elements to 0.

Let $V$ be the $kG$-module which corresponds to $T$ under
the Morita equivalence between $B$ and $\Lambda$. Then $V$ is a quotient module of a
projective indecomposable $kG$-module.
Using the description of the projective indecomposable $\Lambda$-modules in
\cite[Fig. 1]{3quat}, it follows that 
$\underline{\mathrm{End}}_{\Lambda}(T)\cong k$, $\mathrm{Ext}^1_\Lambda(T,T)\cong k$
and $\mathrm{Ext}^2_\Lambda(T,T)\cong k$. Hence the Morita equivalence between $B$ and
$\Lambda$ also gives $\underline{\mathrm{End}}_{kG}(V)\cong k$, $\mathrm{Ext}^1_{kG}(V,V)\cong k$
and $\mathrm{Ext}^2_{kG}(V,V)\cong k$. By \cite[Sect. 1.6]{mazur}, it follows that the universal deformation ring
$R(G,V)$ is isomorphic to a quotient ring of $W[[t]]$ by an ideal generated by a single power series
(which could be zero). 

To finish the proof of Theorem \ref{thm:counter} for $G$ as in family (I), 
it is hence enough to show that the universal mod $2$
deformation ring $R(G,V)/2R(G,V)$ is isomorphic to $k[[t]]$. Note that $R(G,V)/2R(G,V)$ is
universal with respect to isomorphism classes of lifts of $V$ over complete
local commutative Noetherian $k$-algebras with residue field $k$.
Using the Morita equivalence between $B$ and $\Lambda$, it suffices to prove that
the universal deformation ring $R(\Lambda,T)$ of $T$ is isomorphic to $k[[t]]$, 
where $R(\Lambda,T)$ is universal
with respect to isomorphism classes of lifts of $T$  over complete
local commutative Noetherian $k$-algebras with residue field $k$ (see for example \cite[Prop. 2.5]{bv}).

Let $L$ be a free $k[[t]]$-module of rank 5 with $k[[t]]$-basis $\{B_0,B_1,\ldots, B_4\}$. 
Viewing $k$ as a subalgebra of $k[[t]]$, define a $\Lambda$-module structure on
$L$ as follows. 
Let (the image of) $e_i$, $i\in\{0,1,2\}$, (resp. $\zeta\in\{\beta,\delta,\eta\}$) act on $\{B_0,B_1,\ldots,B_4\}$ as 
the matrix $X_{e_i}$ (resp. $X_\zeta$), 
and let $\gamma$ act as $X_\gamma+tE_{24}$. Then $L$ is a $k[[t]]\otimes_k\Lambda$-module
which is free as a $k[[t]]$-module. Moreover, $L/tL\cong T$ as $\Lambda$-modules. 
Hence $L$ defines a lift of $T$
over $k[[t]]$. Therefore,  there exists a continuous $k$-algebra homomorphism 
$\varphi:R(\Lambda,T)\to k[[t]]$ corresponding to the isomorphism class of the lift 
of $T$ over $k[[t]]$ defined by $L$.
Since $L/t^2L$ defines a non-trivial lift of $T$ over $k[[t]]/(t^2)$, it follows that $\varphi$ is surjective.
Because $R(\Lambda,T)$ is isomorphic to a quotient algebra of $k[[t]]$, this implies that $\varphi$ is an 
isomorphism, proving Theorem \ref{thm:counter} for $G$ as in family (I).

\smallskip

\textbf{Family (II) from \cite[Sect. 3.2]{3quat}.}
In family (II), $G\cong \mathrm{SL}_2(\mathbb{F}_q)$ where $q$ is a prime power with $q\equiv 3 \mod 4$ 
such that $2^{d+1}$ is the maximal $2$-power dividing $(q^2-1)$. The principal block $B$ of $kG$ is
Morita equivalent to $\Lambda=kQ/I$ where $Q$ and $I$ are as in Figure \ref{fig:familyII}.
The projective indecomposable $\Lambda$-modules are pictured for example in \cite[Fig. 3]{3quat}. 
\begin{figure}[ht] 
\caption{\label{fig:familyII} $\Lambda=kQ/I$ for blocks $B$ in family (II).}
$$Q=\vcenter{\xymatrix  {
 0\,\bullet \ar@<.7ex>[rr]^{\beta} \ar@<.8ex>[rr];[]^{\gamma}\ar@<.7ex>[rdd]^{\kappa} \ar@<.8ex>[rdd];[]^{\lambda}
&&\bullet\ar@<.7ex>[ldd]^{\delta} \ar@<.8ex>[ldd];[]^{\eta}\,1\\&&\\ &
\unter{\mbox{\normalsize $\bullet$}}{\mbox{\normalsize $2$}}& }}$$
and 
\begin{eqnarray*}
I&=&\langle \delta\beta-\kappa\lambda\kappa,
\gamma\eta-\lambda\kappa\lambda,
\lambda\delta-\gamma\beta\gamma,
\eta\kappa-\beta\gamma\beta,\\
&&\beta\lambda-\eta(\delta\eta)^{2^{d-1}-1},\kappa\gamma-\delta(\eta\delta)^{2^{d-1}-1},\\
&&\delta\beta\gamma, \gamma\eta\delta, \eta\kappa\lambda
\rangle.
\end{eqnarray*}
\end{figure}

Let $T$ be the 
$\Lambda$-module of $k$-dimension 4 which has an ordered $k$-basis given by (the images of)
$$(b_0,b_1,b_2,b_3)=(e_0,\beta,\kappa,\lambda\kappa)$$
so that $T$ is a quotient module of the projective indecomposable
$\Lambda$-module $P_0=\Lambda e_0$. 
The $\Lambda$-module structure of $T$ is given by the following $4\times 4$
matrices with respect to the
$k$-basis $\{b_0,b_1,b_2, b_3\}$:
$Y_{e_0}=E_{00}+E_{33}$, $Y_{e_1}=E_{11}$, $Y_{e_2}=E_{22}$,
$Y_\beta=E_{10}$, $Y_\kappa=E_{20}$, $Y_\lambda=E_{32}$ and $Y_\gamma=0=Y_\delta=Y_\eta$.
Using the description of the projective indecomposable $\Lambda$-modules in
\cite[Fig. 3]{3quat}, it follows that 
$\underline{\mathrm{End}}_{\Lambda}(T)\cong k$ and $\mathrm{Ext}^1_\Lambda(T,T)\cong k
\cong\mathrm{Ext}^2_\Lambda(T,T)$. 
Let $L$ be a free $k[[t]]$-module of rank 4 with $k[[t]]$-basis $\{B_0,B_1,B_2, B_3\}$
and define a $\Lambda$-module structure on $L$ by letting
(the image of) $e_i$, $i\in\{0,1,2\}$, (resp. $\zeta\in\{\beta,\kappa,\lambda,\delta,\eta\}$) act on 
$\{B_0,B_1,B_2,B_3\}$ as the matrix $Y_{e_i}$ (resp. $Y_\zeta$), 
and $\gamma$ act as $Y_\gamma+tE_{31}$. Similarly to family (I), we conclude that
$R(\Lambda,T)\cong k[[t]]$.
If $V$ is the $kG$-module corresponding to $T$ under
the Morita equivalence between $B$ and $\Lambda$, then $V$ is a quotient module of a
projective indecomposable $kG$-module. We use similar arguments as in the proof for 
family (I) to show that $\underline{\mathrm{End}}_{kG}(V)\cong k$ and that the universal mod $2$
deformation ring of $V$  is isomorphic to $k[[t]]$, which proves Theorem
\ref{thm:counter} for $G$ as in family (II).

\smallskip

\textbf{Family (III) from \cite[Sect. 3.3]{3quat}.}
In family (III), $G$ is isomorphic to the non-trivial double cover $\tilde{A}_7$ and $d=3$.
The principal block $B$ of $kG$ is
Morita equivalent to $\Lambda=kQ/I$ where $Q$ and $I$ are as in Figure \ref{fig:familyIII}.
The projective indecomposable $\Lambda$-modules are pictured for example in \cite[Fig. 5]{3quat}. 
\begin{figure}[ht] 
\caption{\label{fig:familyIII} $\Lambda=kQ/I$ for blocks $B$ in family (III).}
$$\xymatrix @R=-.2pc {
&1&0&\\
 Q= \quad&\ar@(ul,dl)_{\alpha} \bullet \ar@<.8ex>[r]^{\beta} \ar@<.9ex>[r];[]^{\gamma}
& \bullet \ar@<.8ex>[r]^(.46){\delta} \ar@<.9ex>[r];[]^(.54){\eta} & \bullet\;2}$$
and
\begin{eqnarray*}
I&=&\langle \beta\alpha-\eta\delta\beta(\gamma\eta\delta\beta),
\alpha\gamma-\gamma\eta\delta(\beta\gamma\eta\delta), \\
&& \eta\delta\eta-\beta\gamma\eta(\delta\beta
\gamma\eta),\delta\eta\delta-\delta\beta\gamma(\eta\delta\beta\gamma)\rangle.
\end{eqnarray*}
\end{figure}

Let $T$ be the 
$\Lambda$-module of $k$-dimension 5 which has an ordered $k$-basis given by (the images of)
$$(b_0,b_1,b_2,b_3,b_4)=(e_2,\eta,\delta\eta,\gamma\eta,\beta\gamma\eta)$$
so that $T$ is a quotient module of the projective indecomposable
$\Lambda$-module $P_2=\Lambda e_2$.
The $\Lambda$-module structure of $T$ is given by the following $5\times 5$
matrices with respect to the
$k$-basis $\{b_0,b_1,\ldots,b_4\}$:
$Z_{e_0}=E_{11}+E_{44}$, $Z_{e_1}=E_{33}$, $Z_{e_2}=E_{00}+E_{22}$,
$Z_\beta=E_{43}$, $Z_\gamma=E_{31}$, $Z_\delta=E_{21}$, $Z_\eta=E_{10}$ and $Z_\alpha=0$.
Using the description of the projective indecomposable $\Lambda$-modules in
\cite[Fig. 5]{3quat}, it follows that 
$\underline{\mathrm{End}}_{\Lambda}(T)\cong k$ and $\mathrm{Ext}^1_\Lambda(T,T)\cong k
\cong\mathrm{Ext}^2_\Lambda(T,T)$. 
Let $L$ be a free $k[[t]]$-module of rank 5 with $k[[t]]$-basis $\{B_0,B_1,\ldots,B_4\}$
and define a $\Lambda$-module structure on $L$  by letting
(the image of) $e_i$, $i\in\{0,1,2\}$, (resp. $\zeta\in\{\alpha,\beta,\gamma,\eta\}$) act on 
$\{B_0,B_1,\ldots,B_4\}$ as the matrix $Z_{e_i}$ (resp. $Z_\zeta$), 
and $\delta$ act as $Z_\delta+tE_{24}$. Similarly to family (I), we conclude that
$R(\Lambda,T)\cong k[[t]]$. 
If $V$ is the $kG$-module corresponding to $T$ under
the Morita equivalence between $B$ and $\Lambda$, we use similar arguments as in the proof 
for family (I) to show that $V$ is a quotient module of a
projective indecomposable $kG$-module,
$\underline{\mathrm{End}}_{kG}(V)\cong k$ and  the universal mod $2$
deformation ring of $V$  is isomorphic to $k[[t]]$. This completes the proof of Theorem
\ref{thm:counter}. 
\end{proof}

The results in \cite{bart} together with Theorem \ref{thm:counter}  provide examples
when $$R(G,V)/pR(G,V)\cong k[[t]]$$ for all primes $p$ except $p=3$. 
The following result
provides an example for $p=3$, and in fact gives additional examples for  $p\ge 5$.
Note that $\mathbb{Z}_p=W(\mathbb{F}_p)$ denotes the ring of $p$-adic integers.

\begin{prop}
\label{prop:oddp}
Let $p\ge 3$, and define
$$G=(\mathbb{F}_p\times \mathbb{F}_p)\semid \mathbb{F}_p^{\,*}$$
where each $a\in\mathbb{F}_p^{\,*}$ acts on the $2$-dimensional vector space
$\mathbb{F}_p\times \mathbb{F}_p$ as multiplication by the diagonal
matrix $\mathrm{diag}(a,a^{-1})\in\mathrm{GL}_2(\mathbb{F}_p)$. 
Then there exists a uniserial $\mathbb{F}_pG$-module $V$ of composition series length
$p-1$ with $\mathrm{End}_{\mathbb{F}_pG}(V)\cong \mathbb{F}_p$ and 
$R(G,V)\cong \mathbb{Z}_p[[t]]/(pt)$.
\end{prop}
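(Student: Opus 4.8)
The plan is to realise $V$ as the inflation of a module over $\mathrm{AGL}_1(\mathbb{F}_p)=\mathbb{F}_p\semid\mathbb{F}_p^{\,*}$ and then to describe the deformation functor of $V$ completely by hand. Write $P=\langle a\rangle\times\langle b\rangle$ for the Sylow $p$-subgroup of $G$, so that $tat^{-1}=a^t$ and $tbt^{-1}=b^{t^{-1}}$ for $t\in\mathbb{F}_p^{\,*}$; then $\langle b\rangle$ is normal in $G$ and $H:=\langle a\rangle\semid\mathbb{F}_p^{\,*}$ is a complement to it with $H\cong\mathrm{AGL}_1(\mathbb{F}_p)$ and $G/\langle b\rangle\cong H$. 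The algebra $\mathbb{F}_pH$ has $p-1$ one-dimensional simple modules $S_0,\dots,S_{p-2}$, indexed by the characters of $\mathbb{F}_p^{\,*}$, and since it has cyclic defect group of order $p$ it is a Brauer tree algebra (indeed a Nakayama algebra whose quiver is an oriented $(p-1)$-cycle); its projective indecomposables are uniserial of length $p$, and all its indecomposable modules are uniserial. I would let $\bar V$ be the uniserial $\mathbb{F}_pH$-module of length $p-1$ with top $S_0$ — equivalently $P(S_0)/\mathrm{rad}^{p-1}P(S_0)$, equivalently the reduction modulo $p$ of $\mathcal{O}_K=W[\zeta_p]$ ($K=\mathbb{Q}_p(\zeta_p)$) equipped with the $\mathrm{AGL}_1(\mathbb{F}_p)$-action in which $a$ acts by multiplication by $\zeta_p$ and $\mathbb{F}_p^{\,*}$ acts by Galois automorphisms — and then put $V=\mathrm{Infl}_{G/\langle b\rangle}^{\,G}\bar V$. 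Then $V$ is uniserial of length $p-1$ with pairwise distinct composition factors, so every nonzero endomorphism of $V$ is injective, hence an automorphism respecting the socle series, hence a scalar; thus $\mathrm{End}_{\mathbb{F}_pG}(V)\cong\mathbb{F}_p$, and since $V$ is non-projective also $\underline{\mathrm{End}}_{\mathbb{F}_pG}(V)\cong\mathbb{F}_p$, so $R(G,V)$ is universal by \cite[Prop.~2.1]{bc}.

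The key structural observation is that $\bar V$ is a rigid $\mathbb{F}_pH$-module: $\Omega\bar V=\mathrm{soc}\,P(S_0)=S_0$ and $\mathrm{soc}\,\bar V=S_{p-2}\neq S_0$, so $\mathrm{Ext}^1_{\mathbb{F}_pH}(\bar V,\bar V)=\underline{\mathrm{Hom}}_{\mathbb{F}_pH}(S_0,\bar V)=0$; hence $R(H,\bar V)\cong W$ and over every complete local Noetherian $W$-algebra $R$ with residue field $\mathbb{F}_p$ the module $\bar V$ has the unique lift $\mathcal{O}_K\otimes_W R$. Consequently, any lift $M$ of $V$ over $R$, restricted to $H$, is isomorphic to $\mathcal{O}_K\otimes_W R$ with its natural $H$-action; transporting the action of $b$ along such an isomorphism, and using that $b$ commutes with $a$, one sees that $b$ acts by multiplication by a unit $u\in(\mathcal{O}_K\otimes_W R)^{\times}$. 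The relations $b^p=1$ and $tbt^{-1}=b^{t^{-1}}$ become $u^p=1$ and $\sigma_t(u)=u^{[t^{-1}]}$ (where $\sigma_t$ is the Galois action), and the condition that $M$ reduces to $V$ becomes $u\equiv1$ modulo $\mathfrak{m}_R$. Moreover the only $H$-automorphisms of $\mathcal{O}_K\otimes_W R$ are multiplications by units of $R$, which are central, so $u$ is a complete isomorphism invariant of $M$. Therefore the deformation functor of $V$ is canonically isomorphic to
\[
R\ \longmapsto\ \bigl\{\,u\in(\mathcal{O}_K\otimes_W R)^{\times}\ :\ u^p=1,\ \ u\equiv1\ (\mathrm{mod}\ \mathfrak{m}_R),\ \ \sigma_t(u)=u^{[t^{-1}]}\ \text{for all }t\in\mathbb{F}_p^{\,*}\,\bigr\}.
\]

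It remains to identify this functor with $\mathrm{Hom}_{W\text{-}\mathrm{alg}}\bigl(W[[t]]/(pt),\,-\,\bigr)=\{\,r\in\mathfrak{m}_R:pr=0\,\}$. Write $\mathcal{O}_K=W[\theta]/(\theta^{p-1}-pw)$ with $w\in W^{\times}$ and $\theta$ a $\chi$-eigenvector for $\mathbb{F}_p^{\,*}$ (so $\sigma_t(\theta)=\omega(t)\theta$, $\omega$ the Teichm\"uller character; note $\omega(t)^{p-2}=\omega(t^{-1})\equiv[t^{-1}]$ modulo $p$). Then $r\mapsto 1+r\theta^{p-2}$ defines a natural transformation into the functor above: since $pr=0$ one has $(r\theta^{p-2})^2=r^2\theta^{p-1}\theta^{p-3}=(pr)\,rw\theta^{p-3}=0$, so $(1+r\theta^{p-2})^p=1+pr\theta^{p-2}=1$, and $\sigma_t(1+r\theta^{p-2})=1+r\omega(t)^{p-2}\theta^{p-2}=1+[t^{-1}]r\theta^{p-2}=(1+r\theta^{p-2})^{[t^{-1}]}$ (the second equality because $r(\omega(t)^{p-2}-[t^{-1}])\in(pr)W=0$). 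This transformation is visibly injective and induces an isomorphism on tangent spaces (both equal to $\mathbb{F}_p$; in particular $\mathrm{Ext}^1_{\mathbb{F}_pG}(V,V)\cong\mathbb{F}_p$, so $R(G,V)$ is a quotient of $W[[t]]$). The heart of the argument — and the step I expect to be the real obstacle — is surjectivity: given $u=1+y$ with $y\in\mathfrak{m}_R(\mathcal{O}_K\otimes_W R)$ satisfying $u^p=1$ and the Galois-equivariance, one must show $y=r\theta^{p-2}$ for some $r\in\mathfrak{m}_R$ with $pr=0$. Working up the $\mathfrak{m}_R$-adic filtration of $\mathcal{O}_K\otimes_W R=\bigoplus_{i=0}^{p-2}R\theta^i$ (on the graded pieces the nonlinearity of $u^{[t^{-1}]}$ disappears), the equivariance forces the component of $y$ in $R\theta^i$ to vanish whenever $i\not\equiv -1$, because $t\mapsto\omega(t)^i-[t^{-1}]$, which is $\equiv t^i-t^{-1}$ modulo $p$, is not identically zero on $\mathbb{F}_p^{\,*}$ unless $i\equiv-1\bmod(p-1)$; hence $y=r\theta^{p-2}$ with $r\in\mathfrak{m}_R$, and then expanding $u^p=1$ with the help of $\theta^{p-1}=pw$ collapses to $pr\cdot(\text{unit})=0$, i.e.\ $pr=0$. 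Granting this, the deformation functor of $V$ is $\mathrm{Hom}_{W\text{-}\mathrm{alg}}(W[[t]]/(pt),-)$, so $R(G,V)\cong W[[t]]/(pt)=\mathbb{Z}_p[[t]]/(pt)$, which proves the proposition.
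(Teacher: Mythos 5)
Your reduction of the deformation problem to a unit-group computation is correct and genuinely different from the paper's argument: you use rigidity of $\bar V$ over $H\cong\mathrm{AGL}_1(\mathbb{F}_p)$ to pin down the restriction of any lift to $H$ as $\mathcal{O}_K\otimes_W R$, and then parametrize lifts by the multiplier $u$ of $b$; the identification of the deformation functor with $\{u\in(\mathcal{O}_K\otimes_W R)^\times: u^p=1,\ u\equiv 1\ (\mathrm{mod}\ \mathfrak{m}_R),\ \sigma_t(u)=u^{[t^{-1}]}\}$ is sound (the well-definedness of $u$ via $(\mathcal{O}_K\otimes_W R)^{\Delta}=R$ is the right point). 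The paper instead computes the tangent space by the Lyndon/Hochschild--Serre spectral sequence and endo-triviality of $V$, exhibits the explicit lift $\rho(\tau)=I_{p-1}+tE$ over $\mathbb{Z}_p[[t]]/(pt)$, and then rules out any larger quotient by a single obstruction computation: a lift over $\mathbb{Z}_p[[t]]/(p^2t,pt^2)$ would force $\hat\rho(\tau)^p\equiv I_{p-1}+ptE\neq I_{p-1}$. That route only needs one well-chosen small extension, whereas yours attempts to describe the whole functor.

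The gap is exactly where you predicted: the surjectivity step, and the mechanism you propose for it does not work as stated. It is not true that "on the graded pieces the nonlinearity of $u^{[t^{-1}]}$ disappears" beyond the leading term, and equivariance alone does \emph{not} force the components $y_i$ ($i\neq p-2$) to vanish. The culprit is the relation $\theta^{p-1}=pw$: if $y\equiv y_{p-2}\theta^{p-2}$ modulo $\mathfrak{m}_R^{n}A$, then $y^2\equiv pw\,y_{p-2}^2\,\theta^{p-3}$ contributes to the $\theta^{p-3}$-component with only a factor of $p$, which does not raise the $\mathfrak{m}_R$-adic level; so the linearized equation at level $n+1$ acquires inhomogeneous terms $\binom{[t^{-1}]}{2}pw\,y_{p-2}^2$ and the induction stalls. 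Concretely for $p=3$: $u=\sqrt{1-3r^2}+r\theta$ (with $\theta=\sqrt{-3}$) satisfies $\sigma_{-1}(u)=u^{-1}$ for any $r\in\mathfrak{m}_R$, yet its $\theta^0$-component is not $1$; only after imposing $u^3=1$ (which forces $3r=0$) does one recover $u=1+r\theta$. So the two conditions cannot be decoupled in the order you propose ($\text{equivariance}\Rightarrow y=r\theta^{p-2}$, then $u^p=1\Rightarrow pr=0$); the relation $u^p=1$ must enter the inductive step itself. The clean fix is to induct along small extensions $0\to J\to R\to R'\to 0$ with $J\mathfrak{m}_R=0$: writing $u=1+r\theta^{p-2}+j$ with $j\in JA$ and $r$ lifting $r'$, one gets $y^2=0$ exactly (since $pw r^2=(pr)rw\in J\mathfrak{m}_R=0$ and $rj=j^2=0$), whence $u^p=1$ gives $pr=0$, and only then does equivariance reduce to the eigenspace computation in $JA\cong\mathbb{F}_p[\theta]/(\theta^{p-1})$ forcing $j\in J\theta^{p-2}$. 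With that repair your argument closes; as written, the crucial step is asserted rather than proved, and for the reason given above the assertion's stated justification is false.
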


\begin{proof}
We write the elements of $G$ as pairs $\left((x,y),a\right)$,
where $(x,y)\in\mathbb{F}_p\times\mathbb{F}_p$ and $a\in \mathbb{F}_p^{\,*}$. 
Note that the action of $a$ on $(x,y)$ is given by $a.(x,y)=(ax,a^{-1}y)$. Let
$a_\varepsilon$ be a primitive element of $\mathbb{F}_p$, i.e. $a_\varepsilon$ generates the 
multiplicative group $\mathbb{F}_p^{\,*}$. Define
$\sigma=((1,0),1)$, $\tau=((0,1),1)$ and $\varepsilon=((0,0),a_\epsilon)$, and
let $K=\langle \tau\rangle$ and $\overline{G}=\langle \sigma,\varepsilon\rangle$.
Then $G=\langle \sigma,\tau,\varepsilon\rangle$, $K$ is a normal subgroup of
$G$, and $\overline{G}\cong G/K$. 
Moreover, $\overline{G}=\langle \sigma\rangle \semid \langle\varepsilon\rangle$
where $\varepsilon$ acts on $\sigma$ as multiplication by $a_\varepsilon$.
Hence  the simple 
$\mathbb{F}_p\overline{G}$-modules are all inflated from simple 
$\mathbb{F}_p\langle \varepsilon\rangle$-modules. If $T_i=\mathbb{F}_p$
such that $\varepsilon$ (resp. $\sigma$) acts as multiplication by $a_\varepsilon^i$ (resp. 1)
for $i=0,1,\ldots,p-2$, then $\{T_0,T_1,\ldots,T_{p-2}\}$ is a complete set of representatives of 
simple $\mathbb{F}_p\overline{G}$-modules. For $0\le i\le p-2$, the projective indecomposable 
$\mathbb{F}_p\overline{G}$-module cover $P_i$ of $T_i$ is uniserial of length $p$ with descending 
composition factors $T_i,T_{i+1},\ldots,T_{i+p-2},T_i$, where the indices are taken modulo $p-1$. 
Note that $\mathbb{F}_p\overline{G}$ is a Brauer tree algebra where the Brauer tree is a star 
with multiplicity 1 and $p-1$ edges labeled counter-clockwise in the order 
$T_0,T_1,\ldots,T_{p-2}$. Since the ring of $p$-adic integers
$\mathbb{Z}_p$ contains all $(p-1)^{\mathrm{th}}$ roots of unity,
it follows 
that its fraction field $\mathbb{Q}_p$ is a splitting field for $\overline{G}$.
The decomposition matrix of $\mathbb{F}_p\overline{G}$ is given in 
Figure \ref{fig:decomp}.

\begin{figure}[ht] 
\caption{\label{fig:decomp} The decomposition matrix of $\mathbb{F}_p\overline{G}$.}
$$\begin{array}{c@{}c}
&\begin{array}{c@{}c@{}c@{}c}\varphi_0\;&\;\varphi_1\;&\;\cdots\;&\;\varphi_{p-2}\end{array}\\[1ex]
\begin{array}{c}\chi_0\\ \chi_1\\ \vdots \\ \chi_{p-2}\\ \chi_{p-1}\end{array} &
\left[\begin{array}{cccc}1&0&\cdots&0\\0&\ddots&\ddots&\vdots\\ \vdots&\ddots&\ddots&0\\
0&\cdots&0&1\\1&1&\cdots&1\end{array}\right]
\end{array}$$
\end{figure}

Let $V$ be the uniserial $\mathbb{F}_p\overline{G}$-module with descending composition factors
$$T_0,T_1,\ldots, T_{p-2}.$$ 
In particular, $\mathrm{End}_{\mathbb{F}_p\overline{G}}(V)\cong \mathbb{F}_p$. 
An explicit matrix representation
$$\overline{\rho}:\overline{G}\to \mathrm{GL}_{p-1}(\mathbb{F}_p)$$
corresponding to $V$ is given in Figure \ref{fig:rep}.
Note that
\begin{equation}
\label{eq:needthis}
s_{p-2}=1\quad\mbox{and}\quad a_\varepsilon^{p-2}=a_\varepsilon^{-1} .
\end{equation}

\begin{figure}[ht] 
\caption{\label{fig:rep} The representation $\overline{\rho}:\overline{G}\to 
\mathrm{GL}_{p-1}(\mathbb{F}_p)$ corresponding to $V$.}
$$\begin{array}{ccc}
\overline{\rho}(\sigma)&=&\left(\begin{array}{ccccc}
1&s_1&s_2&\cdots&s_{p-2}\\
0&1&s_1&\ddots&\vdots\\ \vdots&\ddots&\ddots&\ddots&s_2\\
\vdots&&\ddots&1&s_1\\ 0&\cdots&\cdots&0&1
\end{array}\right)
\\[10ex]
\multicolumn{3}{c}{s_i^{-1}\;=\; i\,! \;\mbox{ in }\; \mathbb{F}_p^{\,*}\quad (1\le i\le p-2)\, ,}
\\[4ex]
\overline{\rho}(\varepsilon) &=& \left(\begin{array}{ccccc}
a_\varepsilon^{p-2}&0&\cdots&\cdots&0\\ 0&a_\varepsilon^{p-3}&0&&\vdots\\
\vdots&\ddots&\ddots&\ddots&\vdots\\ \vdots&&0&a_\varepsilon&0\\
0&\cdots&\cdots&0&1\end{array}\right).
\end{array}$$
\end{figure}

It follows from the description of the projective indecomposable 
$\mathbb{F}_p\overline{G}$-modules
that $\mathrm{Ext}^i_{\mathbb{F}_p\overline{G}}(V,V)=0$ for $i=1,2$. 
In particular, $V$ can be lifted over $\mathbb{Z}_p$ which implies
that $R(\overline{G},V)\cong \mathbb{Z}_p$.

Since $T_0$ is endo-trivial as an $\mathbb{F}_p\overline{G}$-module, it follows that
$V=\Omega^{-1}(T_0)$ is also endo-trivial as an $\mathbb{F}_p\overline{G}$-module.
In other words, there exists a projective $\mathbb{F}_p\overline{G}$-module 
$P$ such that $\mathrm{End}_{\mathbb{F}_p}(V)\cong T_0\oplus P$
as an $\mathbb{F}_p\overline{G}$-module. Using the explicit representation $\overline{\rho}$
in Figure \ref{fig:rep},  an easy matrix calculation shows that the socle
of the $\mathbb{F}_p\overline{G}$-module $\mathrm{End}_{\mathbb{F}_p}(V)$ is 
isomorphic to $T_0\oplus T_1\oplus \cdots \oplus T_{p-2}$. Hence
\begin{equation}
\label{eq:needthis2}
\mathrm{End}_{\mathbb{F}_p}(V)\cong T_0\oplus P_1\oplus\cdots\oplus P_{p-2}.
\end{equation}

We inflate $V$ from $\overline{G}$ to $G$, using that $\overline{G}\cong G/K$, 
and denote the resulting $\mathbb{F}_pG$-module also by $V$. 
Since $K$ acts trivially on $V$, and 
hence on  $\mathrm{End}_{\mathbb{F}_p}(V)$, it follows that 
\begin{eqnarray}
\label{eq:why}
\HH^i(G/K,\HH^0(K,\mathrm{End}_{\mathbb{F}_p}(V))
&\cong&\HH^i(\overline{G},\mathrm{End}_{\mathbb{F}_p}(V))\\
&\cong&
\mathrm{Ext}^i_{\mathbb{F}_p\overline{G}}(V,V)\;=\;0\qquad\mbox{ for $i=1,2$.}
\nonumber
\end{eqnarray}
Applying (\ref{eq:why}) to the Lyndon/Hochschild-Serre spectral sequence
$$\HH^p(G/K,\HH^q(K,\mathrm{End}_{\mathbb{F}_p}(V)))\Longrightarrow
\HH^{p+q}(G,\mathrm{End}_{\mathbb{F}_p}(V))$$
for $p+q=1$, we see that this spectral sequence degenerates to
$$\HH^1(G,\mathrm{End}_{\mathbb{F}_p}(V))
\cong \HH^0(G/K,\HH^1(K,\mathrm{End}_{\mathbb{F}_p}(V))).$$
Therefore we obtain isomorphisms
\begin{eqnarray*}
\HH^1(G,\mathrm{End}_{\mathbb{F}_p}(V)) &\cong& 
\HH^0(\overline{G},\HH^1(K,\mathrm{End}_{\mathbb{F}_p}(V))) \\
&\cong& \mathrm{Hom}(K,\mathrm{End}_{\mathbb{F}_p}(V))^{\overline{G}}\\
&\cong&\mathrm{Hom}_{\mathbb{F}_p\overline{G}}(T_{p-2},\mathrm{End}_{\mathbb{F}_p}(V))
\end{eqnarray*}
where the second isomorphism follows since $K$ acts trivially on $V$ and
the last isomorphism follows since $K=\langle \tau\rangle\cong\mathbb{F}_p$ and
$\varepsilon$ acts on $\tau$ as multiplication by $a_\varepsilon^{-1}=a_\varepsilon^{p-2}$. 
Hence it follows from (\ref{eq:needthis2}) that 
$\HH^1(G,\mathrm{End}_{\mathbb{F}_p}(V))\cong\mathbb{F}_p$, which implies that
the universal deformation ring $R(G,V)$ is isomorphic to a quotient ring of $\mathbb{Z}_p[[t]]$.

Since $R(\overline{G},V)\cong \mathbb{Z}_p$, there exists a matrix representation
$\rho_{\overline{G}}:\overline{G}\to\mathrm{GL}_{p-1}(\mathbb{Z}_p)$
such that the reduction of  $\rho_{\overline{G}}$ modulo $p$ is equal to the representation
$\overline{\rho}$ in Figure \ref{fig:rep}. Define $E$ to be the $(p-1)\times (p-1)$ matrix whose
last entry in the first row is equal to 1 and whose all other entries are equal to 0, and let
$I_{p-1}$ denote the identity matrix of size $p-1$. Let $R=\mathbb{Z}_p[[t]]/(pt)$
and define
$$\rho:G\to \mathrm{GL}_{p-1}(R)$$
by $\rho(g)=\rho_{\overline{G}}(g)$ for all $g\in\langle \sigma,\varepsilon\rangle$ and
$\rho(\tau)=I_{p-1}+tE$.
Using (\ref{eq:needthis}) and that $pt=0$ in $R$, it follows 
that $\rho$ is a group representation of $G$ which
defines a lift of $V$ over $R$ when $V$ is viewed as an $\mathbb{F}_pG$-module.

By the universal property of  $R(G,V)$, there exists a 
unique continuous $\mathbb{Z}_p$-algebra homomorphism
$\gamma:R(G,V)\to R$  corresponding to the isomorphism class of the lift $\rho$. 
If $\mathfrak{m}$ (resp. $\mathfrak{m}_R$) denotes the maximal ideal of $R(G,V)$
(resp. $R$), then $\gamma$ is surjective if and only if it induces a surjection 
\begin{equation}
\label{eq:tan}
\overline{\gamma}:\quad
\frac{R(G,V)}{\mathfrak{m}^2 + pR(G,V)} 
\longrightarrow \frac{R}{\mathfrak{m}^2_R + pR} \cong \frac{\mathbb{F}_p[t]}{(t^2)}.
\end{equation}
Let $\pi_u:R(G,V)\to R(G,V)/(\mathfrak{m}^2 + pR(G,V))$ and
$\pi:R\to R/( \mathfrak{m}^2_R + pR) \cong \mathbb{F}_p[t]/(t^2)$ be  the natural surjections.
Then $\overline{\gamma}\circ \pi_u$ defines a lift of $V$ over $\mathbb{F}_p[t]/(t^2)$,
and $\overline{\gamma}$ is surjective if and only if this lift is not isomorphic to the
trivial lift of $V$ over $\mathbb{F}_p[t]/(t^2)$. However,
since $\overline{\gamma}\circ \pi_u=\pi\circ \gamma$, the lift defined by
$\overline{\gamma}\circ \pi_u$ is isomorphic to
the lift defined by the reduction of $\rho$ modulo $\mathfrak{m}^2_R + pR$.
Since the latter lift is not isomorphic to the trivial lift  of $V$ over $\mathbb{F}_p[t]/(t^2)$,
it therefore follows that $\overline{\gamma}$, and hence $\gamma$, is surjective.
Since $R(G,V)$ is isomorphic to a quotient ring of $\mathbb{Z}_p[[t]]$, 
(\ref{eq:tan}) is in fact an isomorphism.
Let $r$ be any element of $R(G,V)$ such that $\gamma(r)$ is the class of
$t$ in $R = \mathbb{Z}_p[[t]]/(pt)$.  We then have a unique continuous 
$\mathbb{Z}_p$-algebra homomorphism
$$\mu:\mathbb{Z}_p[[t]] \to R(G,V) $$
which maps $t$ to $r$.  
Since $(\gamma\circ\mu)(t)$ is the class of $t$ in $R$, we see that $\gamma\circ\mu$ is surjective.
So because $\overline{\gamma}$ is an isomorphism, Nakayama's lemma implies
that $\mu:\mathbb{Z}_p[[t]] \to R(G,V)$ is surjective. Since $(\gamma\circ\mu)(t)=t$, it follows that the
kernel of $\mu$ is contained in $(pt)$.

Suppose $\gamma$ is not an isomorphism. Then the kernel of $\mu$ is
properly contained in $(pt)$, which means that
$$\mathrm{Ker}(\mu)\subseteq (pt)\cdot (p,t) = (p^2t,pt^2).$$
Hence there exists a surjective continuous $\mathbb{Z}_p$-algebra homomorphism
$\lambda: R(G,V)\to \mathbb{Z}_p[[t]]/(p^2t,pt^2)$ such that the composition of $\lambda$
with the natural surjection 
$\nu:\mathbb{Z}_p[[t]]/(p^2t,pt^2)\to \mathbb{Z}_p[[t]]/(pt)=R$
is equal to $\gamma$. This implies that there exists a group homomorphism
$\hat{\rho}:G\to\mathrm{GL}_{p-1}(\mathbb{Z}_p[[t]]/(p^2t,pt^2))$ with
$\nu\circ\hat{\rho}=\rho$.
In particular,
$$\hat{\rho}(\tau) = I_{p-1}+tE + ptA$$
for some matrix $A\in\mathrm{Mat}_{p-1}(\mathbb{Z}_p[[t]])$ where we view this equation
modulo the ideal $(p^2t,pt^2)$. Since $\hat{\rho}$ is a group homomorphism, 
$\hat{\rho}(\tau)^p$ must be equal to the identity $I_{p-1}$ modulo $(p^2t,pt^2)$. However,
\begin{eqnarray*}
\hat{\rho}(\tau)^p&\equiv&(I_{p-1}+tE + ptA)^p\\
&\equiv&I_{p-1}+ptE+p^2tA\\
&\equiv& I_{p-1} + ptE \mod (p^2t,pt^2)
\end{eqnarray*}
which means that 
$\hat{\rho}$ does not exist.
Therefore, $\gamma$ must be an isomorphism, implying that
$R(G,V)\cong R=\mathbb{Z}_p[[t]]/(pt)$.
\end{proof}

\end{document}